\documentclass{amsart}

\usepackage[utf8]{inputenc}
\usepackage{amsmath}
\usepackage{amsfonts}
\usepackage{amsthm}
\usepackage{authblk}
\usepackage{hyperref}
\usepackage{resizegather}
\usepackage{graphicx} 
\usepackage{enumerate}
\usepackage{amssymb}
\usepackage{amsaddr}
\usepackage{mathtools}
\usepackage{arydshln}
\usepackage{algorithm}
\usepackage{algpseudocode}
\usepackage{float}
\usepackage{xcolor}
\usepackage{geometry}
\geometry{
	hmargin={25mm, 25mm}, 
	vmargin={25mm, 25mm},
	headsep=10mm,
	headheight=5mm,
	footskip=10mm
}
\usepackage{subfig}
\usepackage{wrapfig}
\usepackage{multirow}
\usepackage{float}

\newtheorem{thm}{Theorem}[section]
\newtheorem{cor}[thm]{Corollary}

\newtheorem{lem}[thm]{Lemma}

\newtheorem{prob}[thm]{Problem}
\newtheorem{ques}[thm]{Question}

\newtheorem{defn}[thm]{Definition}
\newtheorem{examp}[thm]{Example}
\newtheorem{notn}[thm]{Notation}
\newtheorem{obs}[thm]{Observation}
\theoremstyle{remark}
\newtheorem{rem}[thm]{Remark}

\numberwithin{equation}{section}

\algrenewcommand\algorithmicrequire{\textbf{Input:}}
\algrenewcommand\algorithmicensure{\textbf{Output:}}
\algnewcommand{\Downto}{\textbf{ downto }}

\DeclareMathOperator*{\argmax}{argmax}

\numberwithin{equation}{section}

\newcommand{\PP}{\mathcal P}
\newcommand{\snk}{s^{n,k}}
\newcommand{\ap}{AP_{n,k}}
\newcommand{\A}{\mathcal{A}}

\newcommand{\slack}{\ensuremath{\textrm{slack}}}

\title{An infinite family of counterexamples to a conjecture on distance magic labeling}

\begin{document}

\maketitle

\begin{center}
  {\Large Ehab Ebrahem, Shlomo Hoory and  Dani Kotlar}\\
  {\small nbehabibraheem@gmail.com, hooryshl@telhai.ac.il and dannykot@telhai.ac.il}\\
  Department of Computer Science, Tel-Hai College, Israel
  \par \bigskip
\end{center}

\begin{abstract}
This work is about a partition problem which is an instance of the distance magic graph labeling problem.
Given positive integers $n,k$ and $p_1\le p_2\le \cdots\le p_k$ such that $p_1+\cdots+p_k=n$ and $k$ divides $\sum_{i=1}^ni$, we study the problem of characterizing the cases where it is possible to find a partition of the set $\{1,2,\ldots,n\}$ into $k$ subsets of respective sizes $p_1,\dots,p_k$, such that the element sum in each subset is equal. 
Using a computerized search we found examples showing that the necessary condition,  $\sum_{i=1}^{p_1+\cdots+p_j} (n-i+1)\ge j{\binom{n+1}{2}}/k$ for all $j=1,\ldots,k$, 
is not generally sufficient, refuting a past conjecture. 
Moreover, we show that there are infinitely many such counter-examples.
The question whether there is a simple characterization is left open and for all we know the corresponding decision problem might be NP-complete.
\end{abstract}

\noindent {\bf Keywords:} Distance magic labeling, equitable partition and weighted fair queuing (WFQ).
\section{Introduction}
Let $G=(V,E)$ be a finite, simple, undirected graph of order $n$. Denoting $[n]=\{1,2,\ldots,n\}$, a \emph{distance magic labeling} of $G$ \cite{miller2003} (or \emph{$\Sigma$-labeling} \cite{beena2009}) is a bijection $f:V\rightarrow[n]$ such that for all $x\in V$, $\sum_{y\in N(x)}f(y)=\mathbf{c}$ for some constant $\mathbf{c}$. 
The notion of distance magic labeling has been extensively studied for many families of graphs (see \cite{gallian2018dynamic}, \cite{anholcer2015distance}, \cite{arumugam2012distance} for surveys). The following problem is equivalent to determining whether a complete multipartite graph \cite{miller2003}, and other types of graphs \cite{anholcer2016spectra}, has a distance magic labeling:

\begin{prob}[\cite{anholcer2016spectra}, Problem 6.3]\label{prob1}
Let $n,k$ and $p_1\le p_2\le \cdots\le p_k$ be positive integers such that $p_1+\cdots+p_k=n$ and $k$ divides $\sum_{i=1}^ni$. When is it possible to find a partition of the set $[n]$ into $k$ subsets of respective sizes $p_1,\dots,p_k$, such that the element sum in each subset is equal?
\end{prob}

Anholcer, Cichacz and Peterin \cite{anholcer2016spectra} observed that a necessary condition for such a partition to exist is that:
\begin{equation}\label{eq:nec_cond}
    \sum_{i=1}^{p_1+\cdots+p_j} (n-i+1)- j\frac{\binom{n+1}{2}}{k}\ge 0\quad\textrm{for all}\quad j=1,\ldots,k.
\end{equation}

It is known that this condition is sufficient whenever $k=2,3$ (\cite{beena2009} and \cite{miller2003}) and $k=4$ \cite{kotlar16}. Consequently, it was conjectured in 
\cite{miller2003} and \cite{kotlar16} that this holds in general. 
In this work, we used computing to refute this conjecture and to find infinite families of counter-examples. 

The outline of the paper is as follows: In Section~\ref{sec:definitions} we introduce definitions and notation; 
in Section~\ref{sec:constructing} we describe the algorithms we used to find a partition of $[n]$ as described in  problem~\ref{prob1}, given $n,k$ and $p_1\le p_2\le \cdots\le p_k$; 
in Section~\ref{sec:criteria} we introduce two criteria for identifying cases where such a partition does not exist (and thus, refuting the above-mentioned conjecture); in Section~\ref{sec:implementation} we describe the computational results of running the algorithms and implementing the criteria on all relevant partitions for $n\le 200$; In Section~\ref{sec:special_families} we describe some infinite families of counter-examples; In Section~\ref{sec:conclusion} we point at possible directions for future research and introduce a few open questions.


\section{Definitions and Notation}\label{sec:definitions}

\begin{defn}\label{def:ap}
Let $n$ and $k$ be positive integers. An ascending partition of $n$ of size $k$ is a sequence of positive integers $\PP=[p_1,p_2,\ldots, p_k]$ where $p_1\le p_2\le\cdots\le p_k$ and $\sum_{i=1}^k p_i = n$.
\end{defn}


\begin{defn}\label{def:eq}
Let $\PP$ be a size $k$ ascending partition of $n$. We say that $\PP$ is equitable if it is possible to partition $[n]$ into subsets of respective sizes $p_1,p_2,\ldots, p_k$ so that the sum of elements in each part is the same. In this case, the resulting partition of $[n]$ is said to be balanced.
\end{defn}

\begin{notn}
    For a size $k$ ascending partition $\PP$ of $n$ to be equitable, the number of parts $k$ must divide $\sum_{i=1}^n i$. In this case, we denote by $\snk$ the sum of each part of the balanced partition of $[n]$. That is, 
\[
s^{n,k}:=\frac{n(n+1)}{2k}
\]
\end{notn}
 
\begin{notn}
    If $n$ and $k$ are such that $\snk$ is an integer, we denote by $AP_{n,k}$ the family of all ascending partitions of $n$ of size $k$. Otherwise, $AP_{n,k}$ is the empty set. 
\end{notn}

\begin{defn}\label{def:slack}
    Let $\PP=[p_1,p_2,\ldots, p_k]\in AP_{n,k}$. For $j=1,\ldots,k$ we denote 
    $$\textrm{slack}_j(\PP) := \sum_{i=1}^{p_1+\cdots+p_j} (n-i+1)- js^{n,k}.$$ 
    This expression is `the slack of $\PP$ at $j$'. We define 
    $$\textrm{slack}(\PP):=\min_{1\le j \le k-1}\textrm{slack}_j(\PP)$$
    and call this expression `the slack of $\PP$'.

\end{defn}

Note that always $\textrm{slack}_k(\PP)=0$, so this term is excluded from the minimum in the definition of $\textrm{slack}(\PP)$.

In terms of this new notation the necessary condition in \eqref{eq:nec_cond} can be formulated as follows:

\begin{obs}
 Let $\PP$ be a size $k$ ascending partition of $n$. If $\PP$ is equitable, then ${\textrm slack}(\PP) \ge 0$.
\end{obs}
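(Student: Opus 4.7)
The plan is to simply translate the necessary condition \eqref{eq:nec_cond}, which is already noted in the paper as being due to Anholcer, Cichacz and Peterin, into the slack notation introduced in Definition~\ref{def:slack}. The proof is essentially a one-step unpacking of definitions, so the only thing to do is to give a self-contained justification of why \eqref{eq:nec_cond} holds.

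First I would assume that $\PP$ is equitable, so by Definition~\ref{def:eq} there exist disjoint subsets $A_1,\ldots,A_k$ of $[n]$ whose union is $[n]$, with $|A_i|=p_i$ and $\sum_{x\in A_i} x = s^{n,k}$ for every $i$. Fix $j\in\{1,\ldots,k\}$ and consider $U_j := A_1\cup\cdots\cup A_j$. Then $|U_j|=p_1+\cdots+p_j$ and the sum of the elements of $U_j$ equals $j\cdot s^{n,k}$.

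Next I would bound the sum of the elements of $U_j$ from above by observing that among all subsets of $[n]$ of a fixed size $m$, the maximum possible element sum is achieved by the top $m$ elements $\{n,n-1,\ldots,n-m+1\}$. Applying this with $m=p_1+\cdots+p_j$ yields
\[
j\cdot s^{n,k} \;=\; \sum_{x\in U_j} x \;\le\; \sum_{i=1}^{p_1+\cdots+p_j}(n-i+1),
\]
which is exactly $\textrm{slack}_j(\PP)\ge 0$. Taking the minimum over $j=1,\ldots,k-1$ gives $\textrm{slack}(\PP)\ge 0$.

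There is no real obstacle here: the only thing worth mentioning is that the case $j=k$ is automatically tight (both sides equal $\sum_{i=1}^n i$), which is why the definition of $\textrm{slack}(\PP)$ legitimately excludes it from the minimum, as already noted in the paragraph following Definition~\ref{def:slack}.
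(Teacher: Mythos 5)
Your proof is correct and is exactly the standard argument behind the necessary condition \eqref{eq:nec_cond} that the paper cites from Anholcer, Cichacz and Peterin; the paper itself gives no separate proof of the Observation, treating it as a direct restatement in slack notation. Your write-up simply makes that cited argument explicit (the union of the $j$ smallest parts has sum $j\,s^{n,k}$, bounded above by the sum of the $p_1+\cdots+p_j$ largest elements of $[n]$), so there is nothing to fix.
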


\begin{defn}
    We call the condition ${\textrm slack}(\PP)\ge0$ the `slack condition'. If this condition holds for $\PP\in AP_{n,k}$ we say that $\PP$ satisfies the slack condition, or that $\PP$ is SSC.
\end{defn}

Problem~\ref{prob1} asks: when is $\PP\in AP_{n,k}$ equitable? As mentioned above, it was shown that the slack condition is sufficient for $\PP$ in $AP_{n,2}$, in $AP_{n,3}$  (\cite{miller2003} and \cite{beena2009}) and in $AP_{n,4}$ \cite{kotlar16} to be equitable, for all $n$. However, we shall show that this is not the case in general, and there are infinitely many counterexamples.

For brevity, we adopt the following notation:

\begin{defn}
    Let $\PP$ be an ascending partition. 
    The notation $\PP=[q_1^{e_1},q_2^{e_2},\ldots, q_t^{e_t}]$ for $q_1<q_2<\cdots<q_t$ means that for $i=1,\ldots,t$, $\PP$ has $e_i$ parts of size $q_i$. Each $q_i^{e_i}$ is called a block, or the $q_i$-block. 
\end{defn}

\begin{examp}\label{ex:39:13}
The partitions $[2^9, 3^2, 5^1, 10^1]$, $[2^9, 3^2, 6^1, 9^1]$, $[2^9, 3^2, 7^1, 8^1]\in AP_{39,13}$ are SSC but non-equitable.
\end{examp}

\begin{proof}
It can be verified that ${\textrm slack}(\PP)\ge0$ in each of the three cases and that $s^{39,13}=60$. Suppose there is a partition of $[39]$ containing nine parts of size 2 and two parts of size 3, so that all its parts' sums are 60. In this case, the parts of size 2 must contain all numbers $21,22,\ldots,39$, excluding 30, in pairs of sum 60. On the other hand any part of size $3$ must contain at least one number greater than $20$. Since there are two parts of size 3 and only one number left that is greater than 20, such a partition is impossible. 
\end{proof}

\noindent
The partitions in Example~\ref{ex:39:13} are the non-equitable SSC partitions with the smallest value of $n$.

\begin{defn}\cite{kotlar16}
    Let $\PP=\{p_1, p_2,\cdots, p_k\}$ be a size $k$ ascending partition of $n$. 
    We say that the partition $\A=\{A_1,A_2,\dots,A_k\}$ of $[n]$ \emph{implements}  $\PP$, if  $|A_i|=p_i$ for all $i=1,\ldots,k$.
\end{defn}

\begin{notn}\cite{kotlar16}
    For any set of numbers $A$, we denoted by $S(A)$ the sum of the elements in $A$. 
    For a partition $\A=\{A_1,A_2,\dots,A_k\}$ of $[n]$, we define $d(\A):=\sum_{i=1}^k (S(A_i)-s^{n,k})^2$.
\end{notn}
\noindent So, $\A$ is balanced if and only if $d(\A)=0$.

\section {Constructing balanced partitions}\label{sec:constructing}

In this section, we describe our algorithms for searching a balanced partition of $[n]$ implementing a given SSC partition in $AP_{n,k}$. 
This search enabled us to find the partitions in Example~\ref{ex:39:13} and many others. 

The problem is to find a practical computational procedure for checking if a given SSC partition $\PP\in AP_{n,k}$ is equitable. 
Since a brute force search over all possible partitions of $[n]$ implementing $\PP$ is not practical, we seek some heuristics for constructing such a balanced partition, if it exists.
The algorithms we consider have two main stages:

\begin{enumerate}
    \item Generating an initial partition $\A$ implementing $\PP$.\label{stage1}
    \item Reducing the value of $d(\A)$ by repeatedly exchanging elements among the parts of $\A$, aiming to eventually attain $d(\A)=0$.\label{stage2}
\end{enumerate}

\subsection{Initialization methods}\label{sec:init}

For stage (1) we can either use a general-purpose initialization method (methods (a)-(c) below) or an initialization method specifically designed to obtain a low value of $d(\A)$ (method (d)). 

\begin{enumerate}
    \item [(a)] {\bf Random placement:}
        The sets $A_1,A_2,\dots,A_k$ are a uniform random partition of $[n]$ with $|A_i| = p_i$.
    {\bf     }
    \item [(b)] {\bf Whole sets placement:}

        The numbers $n,n-1,\ldots,2,1$ are placed in the sets $A_1,A_2,\dots,A_k$, in this order, so that numbers are not placed in $A_j$ before $A_1,\ldots,A_{j-1}$ are filled to their allotted sizes $p_1,\ldots,p_{j-1}$. For example, for the partition $\PP=\{2,3,4\}$ the initial partition of $[9]$ will be $\{\{9,8\},\{7,6,5\},\{4,3,2,1\}\}$.

    \item [(c)] {\bf Round-robin placement:}

    The numbers $n,n-1,\ldots,2,1$ are placed in round-robin order, starting from $A_1$ down to $A_k$ and then back to $A_1$ and so forth, placing elements in sets that are not yet full. For example, for the partition $\PP=\{2,3,4\}$ the initial partition of $[9]$ will be $\{\{9,6\},\{8,5,3\},\{7,4,2,1\}\}$.

    \item [(d)] {\bf Adaptive Round Robin placement algorithm:}

An approach based on the well-known Weighted Fair Queuing (WFQ) scheduling algorithm~(\cite{tanenbaum2011computer} chapter 5). 
Starting from $n$ and going down to 1, we assign each number to the set that is most `in need', where the measure for 'need' is the average number needed to complete the sum of the elements in the set to $\snk$. More precisely, for a set $A_j$ this is the amount needed to reach $\snk$ divided by the number of slots still available in $A_j$. If at some stage $A_j$ has already been assigned $n_j<p_j$ elements, whose sum is $s_j$, then the 'need' of $A_j$ is 
\begin{equation}\label{def:need:original}
    n(A_j) = (\snk-s_j)/(p_j-n_j). 
\end{equation}
Thus, the set that gets the next number is the one with the maximum need.

Note that when referring to a set $A_j$ we are using the term `set' loosely, not as a collection of some fixed elements, but rather as a collection of a fixed number of slots in which we can place elements (an element in each slot). Initially, these slots are empty and we fill them, one-by-one in the process. 

As done in the WFQ algorithm, the procedure described above can be improved upon. 
So, instead of calculating the current 'need' we calculate the resulting need if the next number is assigned to the set. So, if the next number to be placed is $x$ and $A_j$, $n_j$ and $s_j$ are as above, then we revise \eqref{def:need:original} to:

\begin{equation}\label{def:need:fixed}
    n(A_j,x) = (\snk-s_j-x)/(p_j-n_j-1), 
\end{equation}
where in the case that $p_j=n_j+1$ we set the denominator of \eqref{def:need:fixed} to 1, to avoid division by zero.

\begin{algorithm}[H]
\caption{The Adaptive Round Robin algorithm for placing the numbers $1,\ldots,n$ in sets of given sizes so that the sums of the sets are nearly equal}\label{alg:adaptiveRoundRobin}
\begin{algorithmic}[1]
\Require $n, k$
\Require $p[1], \ldots,p[k]$ \Comment{A non-descending list of positive integers that sum up to $n$}
\Ensure $partition$ \Comment{A list of $k$ lists where the size of partition[$j$] is p[$j$] for all $j$}
\State $s \gets \textrm{sum}([1,2,\ldots,n])/k$
\State $indexes \gets [1,2,\ldots,k]$
\State $partition \gets [\;[\;],\ldots,[\;]\;]$ \Comment{A list of $k$ empty lists, indexed from 1 to $k$}
\For{$i \gets n \Downto 1$}
    \If {for some $t\in indexes$, $\textrm{sum}(partition[t])=s-i$ and $\textrm{size}(partitions[t])=p[t]-1$}
        \State $j \gets t$ \Comment {the $t$th part will be completed}
    \Else
        \State $j \gets \argmax((s-\textrm{sum}(partition[t])-i)/\max((p[t]-len(partition[t])-1),1))$ for $t\in indexes$
    \EndIf
    \State $\textrm{AddItem}(partition[j],i)$
    \If {$\textrm{size}(partition[j]) = p[j]$}
    \State $\textrm{RemoveItem}(indexes,j)$
    \EndIf

\EndFor
\end{algorithmic}
\end{algorithm}

\end{enumerate}

\subsection{Reducing the value of $d(\A)$ by transpositions}\label{sec:reducing}

After initializing a partition $\A$ of $[n]$ we start making modifications with the goal of obtaining a balanced partition. 

\begin{defn} \cite{kotlar16} 
Let $\A=\{A_1,A_2,\dots,A_k\}$ be a partition of $[n]$ and let $a,b\in[n]$. Suppose $a\in A_i$ and $b\in A_j$. 
We denote by $\chi_{a,b}$ the operator that acts on $\A$ by exchanging $a$ and $b$ between $A_i$ and $A_j$. So, if $i\ne j$, the result is a new partition $\chi_{a,b}(\A)$, where $A_i$ is replaced by $A_i\setminus\{a\}\cup \{b\}$ and $A_j$ is replaced by $A_j\setminus\{b\}\cup \{a\}$. Otherwise, $\chi_{a,b}$ is the identity. We call $\chi_{a,b}$ an exchange. If $|b-a|=1$ we call $\chi_{a,b}$ a transposition.
\end{defn}

\begin{obs} \cite{kotlar16}\label{obs:1}
Let $\A=\{A_1,A_2,\dots,A_k\}$ be a partition of $[n]$  and let $a\in A_i$, $b\in A_j$ for $i\ne j$.  Let $t=b-a$ and $u=S(A_j)-S(A_i)$. Then, 
\begin{enumerate}
    \item [(i)] $d(\chi_{a,b}(\A))=d(\A)-2t(u-t)$
    \item [(ii)]
\[d(\chi_{a,b}(\A))\begin{cases}
			=d(\A), & \text{if $b-a=S(A_j)-S(A_i)$}\\
            >d(\A), & \text{if $b-a > S(A_j)-S(A_i)$}\\
            <d(\A), & \text{if $b-a < S(A_j)-S(A_i)$}
		 \end{cases}
\]
\end{enumerate}
\end{obs}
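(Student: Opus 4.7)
The plan is a direct algebraic expansion. The key observation is that applying $\chi_{a,b}$ to $\A$ modifies only two of the $k$ summands of $d(\A)$, namely those associated with $A_i$ and $A_j$: after the exchange, the sum of the new $i$-th part becomes $S(A_i) + (b-a) = S(A_i) + t$ and the sum of the new $j$-th part becomes $S(A_j) - t$, while all other part-sums remain unchanged. So all the action is concentrated in two perfect squares.

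First I would write
\[
d(\chi_{a,b}(\A)) - d(\A) = \bigl[(S(A_i) + t - \snk)^2 - (S(A_i) - \snk)^2\bigr] + \bigl[(S(A_j) - t - \snk)^2 - (S(A_j) - \snk)^2\bigr],
\]
expand each bracket using $(x+y)^2 - x^2 = 2xy + y^2$, and note that the $\snk$ terms cancel against each other. What remains collapses to $2t\bigl(S(A_i) - S(A_j)\bigr) + 2t^2 = -2tu + 2t^2 = -2t(u-t)$, which is exactly statement (i).

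For (ii), I rewrite the same difference as $d(\chi_{a,b}(\A)) - d(\A) = 2t(t-u)$. This vanishes precisely when $t = u$, i.e.\ when $b - a = S(A_j) - S(A_i)$, giving the equality case; under the natural convention that $t > 0$ (the relevant setting when one seeks an exchange to reduce $d$), the sign of $t-u$ matches the sign of the difference, yielding the two strict cases directly. There is no real obstacle here — the entire argument is careful bookkeeping of a difference of squares, once one notices that only two parts of $\A$ are affected by the exchange.
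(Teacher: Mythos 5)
Your computation is correct and is the standard (essentially only) way to prove this: only the $i$-th and $j$-th part-sums change, to $S(A_i)+t$ and $S(A_j)-t$, and expanding the two affected squares gives $d(\chi_{a,b}(\A))-d(\A)=2t(S(A_i)-S(A_j))+2t^2=-2t(u-t)$, which is (i); the paper itself offers no proof, citing the result from an earlier reference. Your remark that the strict inequalities in (ii) need the convention $t=b-a>0$ is a legitimate observation — part (ii) as literally stated fails for $b<a$ — so making that convention explicit is the right call.
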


\begin{defn}
    Let $\A$ be a partition of $[n]$ and $a,b\in[n]$. We say that $\chi_{a,b}$ is an improving exchange if $d(\chi_{a,b}(\A))<d(\A)$ and a neutral exchange if $d(\chi_{a,b}(\A))=d(\A)$.
\end{defn}

Given the initial partition $\A$, as described in the previous subsection, we aim to reduce the value of $d(\A)$, possibly down to zero, by performing improving exchanges. So, we need to find pairs $a,b$ so that $\chi_{a,b}$ is improving, that is, $b-a < S(A_j)-S(A_i)$, by Observation \ref{obs:1}. 
For convenience, we limit ourselves to transpositions. This approach is supported by the fact that any exchange is the composition of transpositions, as it can be shown, that similarly to the case of symmetry groups, for any $a<b\in [n]$,
\[\chi_{a,b} = \chi_{a,a+1}\chi_{a+1,a+2}\cdots\chi_{b-2,b-1}\chi_{b-1,b}\chi_{b-2,b-1}\cdots\chi_{a+1,a+2}\chi_{a,a+1}.\]
Note that this does not mean that any improving exchange is the composition of improving transpositions, so, we cannot guarantee that any possible improving exchange can be achieved by a series of improving transpositions, but in practice, this approach seems to work.

We shall describe a heuristic that applies transpositions in order to find a balanced partition implementing a given equitable ascending partition. The heuristic contains three types of steps:

\begin{enumerate}
    \item Improving step: perform improving transpositions as long as possible.
    \item Neutral step: If after step (1) still $d(\A)>0$ we perform a neutral transposition and go back to step (1)
    \item Perturbation step: If $d(A)>0$  and there are no improving or new neutral transpositions to be made, or after performing a fixed number of rounds of steps (1) and (2) still  $d(A)>0$, we perturb the system by performing a fixed number of random exchanges and go back to step (1).
\end{enumerate}

The algorithm stops once we obtain $d(A)=0$ or after a fixed number of rounds of step (3). In the latter case, the equitability status of the partition is unresolved.

To illustrate steps (1) and (2), consider the ascending partition $[2^3, 3^1, 4^1, 7^1]\in AP_{20,6}$. After initialization and performing some improving transpositions, we obtained the partition 
$$\{14, 20\}, \{15, 19\}, \{17, 18\}, \{9, 10, 16\}, \{5, 7, 11, 13\}, \{1, 2, 3, 4, 6, 8, 12\}$$
of $[20]$. The sum in the first two parts is 34 (1 less than $s^{20,6}=35$) and the sum in the last two parts is 36, so the partition is not balanced. The reader can verify that there are no improving transpositions to be made. If we perform the neutral transposition $\chi_{15,16}$ we obtain the partition 
$$\{14, 20\}, \{16, 19\}, \{17, 18\}, \{9, 10, 15\}, \{5, 7, 11, 13\}, \{1, 2, 3, 4, 6, 8, 12\},$$
and now we can perform the improving transposition $\chi_{10,11}$ and obtain:
$$\{14, 20\}, \{16, 19\}, \{17, 18\}, \{9, 11, 15\}, \{5, 7, 10, 13\}, \{1, 2, 3, 4, 6, 8, 12\},$$
where only the first and the last parts have a sum different from 35. 
Again we don't have an improving transposition available. So, we perform the neutral transposition $\chi_{14,15}$ to obtain the partition
$$\{15, 20\}, \{16, 19\}, \{17, 18\}, \{9, 11, 14\}, \{5, 7, 10, 13\}, \{1, 2, 3, 4, 6, 8, 12\},$$
and then the improving transposition $\chi_{11,12}$ will produce the balanced partition
$$\{15, 20\}, \{16, 19\}, \{17, 18\}, \{9, 12, 14\}, \{5, 7, 10, 13\}, \{1, 2, 3, 4, 6, 8, 11\}.$$


\hfill

The following algorithm takes a partition $\A$ of $[n]$ and attempts to reduce the value of $d(\A)$ by transpositions, aiming to obtain a balanced partition.

\begin{algorithm}[H]
\caption{Balancing a given partition of $[n]$}\label{alg:balance_partition}\label{alg:transpositions}
\begin{algorithmic}[1]
\Require $n,k$ 
\Require $Partition$ \Comment{a partition of $[n]$}
\Require $MaxStepsUntilPerturbation$ 
\Require $MaxPerturbations$ 
\Ensure $IsEquitable$ \Comment {True means that the Partition can be balanced}
\State $NeutralSteps = 0$
\State $PerturbationSteps = 0$
\State $IsEquitable = IsBalanced(Partition)$
\While{not $IsEquitable$ and $PerturbationSteps<MaxStepsBetweenPerturbations$}
    \If {$MakeImprovingTransposition(Partition) = TRUE$}
        \State $NeutralSteps = 0$
        \State $IsEquitable = IsBalanced(Partition)$
    \ElsIf {$NeutralSteps\le MaxStepsUntilPerturbation$}
        \State $MakeNeutralTransposition(Partition)$
        \State $NeutralSteps = NeutralSteps +1$
    \Else
        \State $MakePerturbation(Partition)$
        \State $PerturbationSteps=PerturbationSteps+1$
        \State $NeutralSteps=0$
    \EndIf
\EndWhile
\end{algorithmic}
\end{algorithm}

The procedure $IsBalanced$ returns true if the partition is balanced, and False otherwise. The procedure \textit{MakeImprovingTransposition} chooses an improving transposition randomly and performs it, if such a transposition exists, in which case it returns True. Otherwise, it returns False.
The procedure \textit{MakeNeutralTransposition} chooses a neutral transposition randomly, if one exists, and performs it. Finally, the procedure \textit{MakePerturbation} randomly picks two distinct sets in the partition and exchanges two random elements between them. 
This is done $m$ times. We tried several values for $m$ and found out that $m=\lfloor\sqrt{k}\rfloor$ works well.

\section{Criteria for non-equitability} \label{sec:criteria}
Verifying whether an ascending partition $\PP$ is equitable by trying to construct a balanced partition implementing $\PP$ is very time-consuming and is not guaranteed to produce the correct answer if $\PP$ is not equitable. 
As will be detailed in Section~\ref{sec:implementation}, running the software for all SSC ascending partitions for $n\le 200$, left some ascending partitions undecided, namely, it failed to find a corresponding balanced partition of $[n]$. Individually verifying that each of these partitions is non-equitable, as illustrated in Examples~\ref{ex:39:13}, is not practical, so instead we generalized that idea into a low-complexity criterion. 
In this section, we introduce two such criteria that suffice to detect all non-equitable SSC ascending partitions for $n\le200$, where each one of them is not sufficient on its own. There is no guarantee that these two criteria suffice for partitions with $n>200$.

The two criteria for identifying non-equitable partitions are restricted to $P = [2^e, p^f, \ldots] \in AP_{n,k}$ with $e,f>0$ and $p \ge 3$,
for which we introduce the following notation:

\begin{notn}\label{notn:h}
    Given the partition $P = [2^e, p^f, \ldots] \in AP_{n,k}$ with $e,f>0$ and $p \ge 3$, 
    denote the set of numbers that can be assigned to a 2-tuple in a balanced partition by $C = \{x\in [n]:x \ge c\}$ where $c = \snk-n$,
    and denote the number of $C$-elements not assigned to a 2-tuple by 
\begin{equation}\label{eq:h}
    h = |C| - 2e = 2n - \snk + 1 - 2e.
\end{equation}

\end{notn}

The idea of Criterion 1 for proving the non-equitability of the ascending partition $[p_1,p_2,\ldots,p_k]$ is that any attempt to find a balanced partition prematurely runs out of elements of the set $C$ and it is impossible to satisfy the remaining parts because of what we loosely call the ``slack without $C$ condition". Example~\ref{ex:39:13} is the first example of a non-equitable SSC partition, and the proof given there is an example of such argument.

\begin{thm}[Criterion 1]\label{thm:criterion1}
    Let $P = [2^e, p^f, \ldots] \in AP_{n,k}$ with $e,f>0$ and $p \ge 3$, with $C,c,h$ as defined in Notation~\ref{notn:h}, such that $f>h$, and
    \begin{eqnarray}
        \sum_{i=c-p(f-h)}^{c-1}i < (f-h)\snk.\label{eq:criterion1}
    \end{eqnarray}
    Then $P$ is non-equitable.
\end{thm}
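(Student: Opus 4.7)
The plan is to argue by contradiction. Suppose $P$ is equitable, and fix a balanced implementation $\A = \{A_1, \ldots, A_k\}$ of $P$.

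The first step is to identify the $C$-elements as the ``large'' ones: every size-2 part $A_i = \{a, b\}$ satisfies $a + b = \snk$, and since both $a, b \le n$ we get $a = \snk - b \ge \snk - n = c$ and similarly $b \ge c$, so $\{a, b\} \subseteq C$. Hence the $e$ size-2 parts consume exactly $2e$ distinct elements of $C$, leaving exactly $h = |C| - 2e$ elements of $C$ to be distributed among the remaining $k - e$ parts of $\A$, all of which have size at least $3$.

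The second step is to bound below the number of size-$p$ parts that contain \emph{no} element of $C$. Each leftover $C$-element sits in at most one part, and some may even end up in parts of size strictly greater than $p$; therefore at most $h$ of the $f$ size-$p$ parts contain any $C$-element at all. Since $f > h$ by hypothesis, at least $f - h \ge 1$ size-$p$ parts lie entirely inside $[n] \setminus C = \{1, 2, \ldots, c-1\}$. Call these ``pure'' size-$p$ parts $B_1, \ldots, B_{f-h}$; they are pairwise disjoint, each of cardinality $p$, and each summing to $\snk$ because $\A$ is balanced.

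The third step extracts the contradiction by double counting. On one hand, $\sum_{j=1}^{f-h} S(B_j) = (f - h)\snk$. On the other hand, $\bigcup_{j} B_j$ is a $p(f-h)$-element subset of $\{1, \ldots, c-1\}$, whose sum is maximized by taking the $p(f-h)$ largest available values, namely $\{c - p(f-h), c - p(f-h) + 1, \ldots, c - 1\}$, giving the upper bound $\sum_{i = c - p(f-h)}^{c-1} i$. The hypothesis \eqref{eq:criterion1} says precisely that this upper bound is strictly smaller than $(f-h)\snk$, which is the required contradiction.

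The proof is essentially a pigeonhole computation and I do not expect any real obstacle. The only point that deserves a sentence of justification is that ``at most $h$ size-$p$ parts are touched by $C$'' really is an upper bound: if two leftover $C$-elements share a single size-$p$ part, or if some $C$-element is absorbed by a part of size greater than $p$, the number of size-$p$ parts containing a $C$-element only decreases, which strengthens the conclusion. One may also silently restrict to the case $p(f-h) \le c - 1$, since otherwise the pure parts cannot even be formed as disjoint subsets of $\{1, \ldots, c-1\}$ and the contradiction is immediate.
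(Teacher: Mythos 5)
Your proof is correct and follows essentially the same route as the paper's: the $2$-blocks must be filled entirely from $C$, leaving only $h$ elements of $C$ for the remaining parts, so at least $f-h$ of the $p$-parts lie in $\{1,\ldots,c-1\}$ and their total sum is bounded by $\sum_{i=c-p(f-h)}^{c-1} i$, contradicting \eqref{eq:criterion1}. The only difference is that you spell out the justification that each $2$-tuple is contained in $C$ and the boundary case $p(f-h) > c-1$, both of which the paper leaves implicit.
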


\begin{proof}    
    Assume for contradiction that there exists a balanced partition $\A$ implementing $\PP$. 
    Then, all 2-tuples are assigned with $C$-elements, so there are $h$ $C$-elements left after populating the 2-tuples. 
    Therefore, there are at least $f-h > 0$ $p$-tuples not containing any $C$-element. 
    The sum of elements in these tuples is at most as large as the sum of the $(f-h)p$ largest numbers not in $C$, which by \eqref{eq:criterion1} is less than $(f-h)\snk$, 
    contrary to our assumption that $\A$ is balanced a partition.
\end{proof}

As observed in this study, there are non-equitable SSC partitions that are not explained by Criterion 1. The first such partition is the following example.  Criterion 2 generalizes the principle and is able to catch all the remaining non-equitable SSC partitions up to $n=200$.

\begin{examp}
The partition $\PP=[2^{25}, 3^2, 4, 6, 14]\in AP_{80,30}$ is SSC but non-equitable.
\end{examp}

\begin{proof}
    It can be verified that ${\textrm slack}(\PP)\ge0$. 
    We have $s^{80,30}=108$, and we know that each of the 25 pairs must be assigned two elements from the set $C=\{c,\ldots,80\}$, where $c=28$ and $|C|=53$. 
    So, assume for contradiction that there exists a balanced partition $\A$ implementing $\PP$. 
    Then, the number of $C$-elements that are left after populating the $2^{25}$ block is $h=3$. 
    So, non-equitability cannot be detected by Criterion 1, since the condition $f > h$ in Theorem~\ref{thm:criterion1} is not satisfied.
    However, the middle element of $C$ is $m=54$ and as each of the $25$ pairs gets one element larger than $m$ and one smaller, we conclude that the remaining 3 elements after the 2-block must be $x,y$ and $m$ where $x < m < y$. 
    Since $m + (c-1) + (c-2) = 107 < s^{80,30}=108$, we conclude that the block $3^2$ consumes the remaining 3 elements from $C$ and that the subsequent $4$-tuple is satisfied with elements smaller than $c$.
    However, $\sum_{i=c-4}^{c-1} i = 102 < s^{80,30}$, contradicting the assumption that $\A$ is balanced.
\end{proof}

\begin{thm}[Criterion 2]\label{thm:criterion2}
    Given $0<k<n$, such that $\snk$ is even, let $\PP = [p_1,\ldots,p_k] = [2^e, p^f,\ldots]\in \ap$ for some $e,f>0$ and $p\ge 3$. Let $C,c$, and $h$ be as defined in Notation~\ref{notn:h}. Let $m=\snk/2$, $\overline{h} = \lceil h/2\rceil$ and $\underline{h}=\lfloor h/2 \rfloor$.

    Suppose
    \begin{equation}\label{eq:low_sum_condition}
        m+\sum_{i=c-p+1}^{c-1}i<\snk
    \end{equation}
    
    and define
      \[ g = \begin{cases}
      \mbox{$h - f$}   & \mbox{ if $ f\leq\underline{h} $}\\
      \mbox{$\overline{h} - 2 (f-\underline{h})$} & \mbox{ if $ f>\underline{h} $}
      \end{cases}
      \]
      If (i) $g<0$ or (ii) $g\ge 0$ and for $q=p_{e+f+g+1}$, $\sum_{i=c-q}^{c-1}i<\snk$, then $\PP$ is non-equitable.
    
\end{thm}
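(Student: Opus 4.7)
The plan is to assume for contradiction that $\PP$ is equitable, fix a balanced partition $\A$ implementing it, and derive a contradiction by counting how many $C$-elements the $p^f$-block is forced to absorb. Since $\snk=2m$ is even, $|C|=2(n-m)+1$ is odd and $m$ is the midpoint of $C$. As already used in the proof of Theorem~\ref{thm:criterion1}, every 2-tuple of $\A$ must consist of two $C$-elements summing to $\snk$, and hence has the form $\{m-t,m+t\}$ with $t\ge 1$; in particular $m$ never lies in a 2-tuple. Thus, after the $2^e$-block is populated, the $h=|C|-2e$ remaining $C$-elements split into $\underline{h}$ elements of $C_+:=C\cap(m,n]$, $\underline{h}$ elements of $C_-:=C\cap[c,m)$, and the singleton $\{m\}$.

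The next step is to derive two structural constraints on every $p$-tuple $T$ of $\A$. First, \eqref{eq:low_sum_condition} gives $\sum_{i=c-p+1}^{c-1}i<m$, and combined with $m\le n$ this upgrades to $\sum_{i=c-p}^{c-1}i<\snk$, so $T$ must contain at least one $C$-element. Second, if $T$ contains exactly one $C$-element then that element must lie in $C_+$; otherwise its value is at most $m$ and together with the top $p-1$ non-$C$ elements it contributes at most $m+\sum_{i=c-p+1}^{c-1}i<\snk$ by~\eqref{eq:low_sum_condition}. Let $a$ count the $p$-tuples that contain at least one $C_+$-element and let $b=f-a$ count the rest; each of the latter uses at least two elements of $C_-\cup\{m\}$, so the $p^f$-block consumes at least $a$ elements of $C_+$ and at least $2b$ of $C_-\cup\{m\}$, subject to $a\le\underline{h}$ and $2b\le\overline{h}$. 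Maximising $a$ to minimise $a+2b=2f-a$, the minimum $C$-usage of the $p^f$-block is $f$ when $f\le\underline{h}$ and $2f-\underline{h}$ when $f>\underline{h}$, and the second is feasible precisely when $2(f-\underline{h})\le\overline{h}$, which is equivalent to $g\ge 0$. In either regime, the maximum number of $C$-elements that can remain after the $p^f$-block equals $g$.

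Finally, Case~(i) $g<0$ immediately contradicts the existence of $\A$, since the $p^f$-block cannot be supplied with enough $C$-elements at all. In Case~(ii), $g\ge 0$ and at most $g$ $C$-elements remain for the subsequent tuples. The $g+1$ tuples in positions $e+f+1,\ldots,e+f+g+1$ all have size at most $q=p_{e+f+g+1}$ because $\PP$ is non-decreasing; and since $\sum_{i=c-q'}^{c-1}i$ is non-decreasing in $q'$, the hypothesis $\sum_{i=c-q}^{c-1}i<\snk$ forces each of them to contain at least one $C$-element. This requires $g+1$ $C$-elements from a remaining pool of at most $g$, the desired contradiction. The step I expect to be most delicate is the identification of $g$ with the maximum number of $C$-elements that can remain after the $p^f$-block: one must check that the dichotomy ``one $C_+$-element per $p$-tuple, or else two from $C_-\cup\{m\}$'' really captures the true minimum $C$-usage, and that the integer feasibility condition $2(f-\underline{h})\le\overline{h}$ matches $g\ge 0$ for both parities of $\overline{h}$.
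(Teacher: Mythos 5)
Your proof is correct and follows essentially the same route as the paper's: you split the leftover $C$-elements symmetrically about $m$, use \eqref{eq:low_sum_condition} to force each $p$-tuple to take either one element above $m$ or two from below, count the minimum consumption to identify $g$ as the maximum surplus, and then derive the contradiction in cases (i) and (ii) exactly as the paper does. Your explicit optimization over $a$ and the feasibility check $2(f-\underline{h})\le\overline{h}\Leftrightarrow g\ge 0$ is a slightly more careful rendering of the paper's ``most thrifty use'' argument, but it is the same proof.
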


\begin{proof}
    Let $\PP\in AP_{n,k}$ satisfy the given conditions and either (i) or (ii). 
    Assume for contradiction that $\PP$ is implemented by the balanced partition $\A$. 
    Let $C'\subset C$ be the set of elements of $C$ that are not assigned to the block $2^e$.
    So, $h=|C'|$ and, since $\snk$ is even, $|C|$ and $h$ are odd. So, $C'$ is nonempty and contains the number $m=\snk/2$. Let $C'_{\leq}=\{x\in C': x\le m\}$ and $C'_{>}=C'\setminus C'_{\leq}$. The set $C'$ consists, in addition to the number $m$, of pairs of numbers such that the sum of each pair is $\snk$.  
    So, $\underline{h}=|C'_{>}|$ and $\overline{h} = |C'_{\leq}| = \underline{h} + 1$.

    Condition \eqref{eq:low_sum_condition} means that if a $p$-tuple in $\A$ contains only one element of $C'$ this element must be greater than $m$, that is, it must be from $C'_{>}$. Otherwise, the $p$-tuple must contain at least two elements of $C'$. 
    The parameter $g$ is an upper bound on the number of $C'$-elements not assigned to the $p^f$ block, where $g<0$ means that there is a $p$-tuple not using any $C'$-element.
    To derive the formula for $g$, observe that the most thrifty use of $C'$-elements is to start by using $C'_{>}$ with one element per $p$-tuple and after it runs out, to switch to using $C'_{\leq}$ with two elements per $p$-tuple. If $ f\leq\underline{h} $, the procedure ends during first stage and we get $g = |C'|-f = h-f$.
    Otherwise, $g = |C| - |C'_{>}| - 2(f-|C'_{>}|) = \overline{h} - 2 (f-\underline{h})$.
    
    If condition (i) holds, it means that there is a $p$-tuple not assigned any $C$-element by $\A$. The element sum of this $p$-tuple is at most $\sum_{i=c-p}^{c-1}i < m+\sum_{i=c-p+1}^{c-1}i<\snk$, by \eqref{eq:low_sum_condition}, contradicting the assumption that $\A$ is a balanced partition. 
    If condition (ii) holds, then the first tuple not assigned any $C$-element by $\A$ is at index $i \leq e+f+g+1$ and has size at most $q$. 
    However, the extra condition in (ii) means that even populating this $q$-tuple with the largest elements not in $C$ yields a smaller sum than required. 
    Thus, again, $\A$ cannot be a balanced partition, and a contradiction follows.
\end{proof}


\section{Experimental results}\label{sec:implementation}

Our experiments included going exhaustively over all ascending partitions $AP_{n,k}$ for $n=1,\ldots,200$ and all legal values of $k$ where $p_1 \geq 2$. Analysis of the case $p_1=1$ is completely understood and is discussed elsewhere (see, for example, \cite{kotlar16}). For each SSC partition $\PP$, we ran the algorithms described in Section~\ref{sec:constructing} to search for a balanced  partition implementing the partition $\PP$, and in the case that such an assignment was not found, we checked if it is non-equitable by one of the two criteria described in Section~\ref{sec:criteria}.
This enabled us to exhaustively classify all $\PP \in AP_{n,k}$ for $n \leq 200$ as one of the following:
\begin{enumerate}
    \item Non-SSC (and thus, non-equitable)
    \item Equitable, witnessed by some implementing assignment
    \item Non-equitable SSC, where criteria 1 or 2 or both hold
\end{enumerate}
Although smallest in size, the third category is the most intriguing class, as it contains the counterexamples refuting the conjecture made in \cite{miller2003} and \cite{kotlar16}, that SSC implies equitability. All algorithms where implemented in C++ and ran on a Linux workstation\footnote{Intel (R) Xeon (R) Platinum 8168 CPU @ 2.70GHz, 96 cores, 1TB total ram. The C++ program is single threaded, so probably only a small portion of available resources where utilized.}.  The total run time of the experiment was 59 hours for processing all partitions for $n \leq 200$.
In Figure~\ref{fig:number_of_partitions} one can see the number of partitions in each of the above three categories.

\begin{figure}[H]
  \centering
  \subfloat{{\includegraphics[width=0.8\textwidth,clip,trim=0mm 35mm 0mm 35mm]{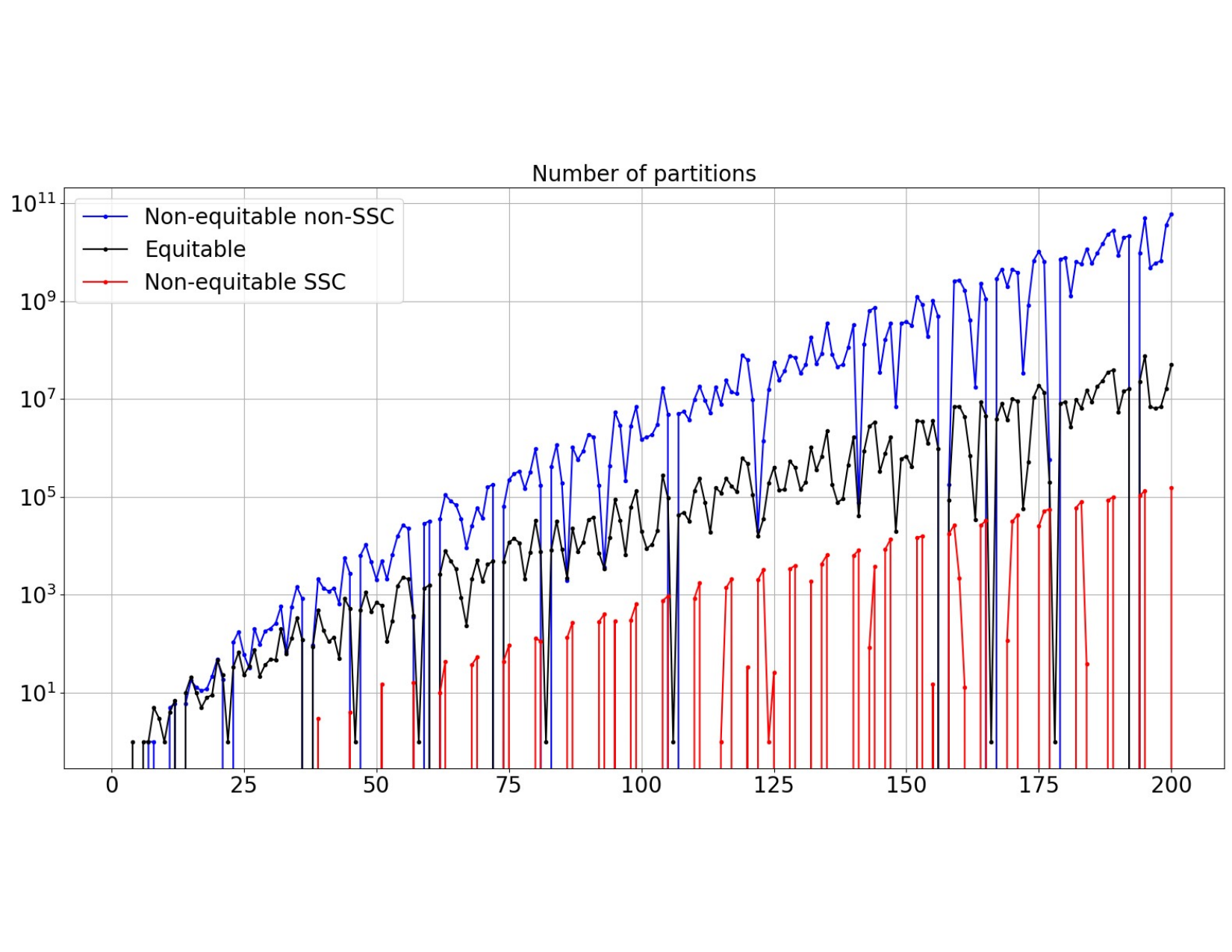}}} \\
  \subfloat{{\includegraphics[width=0.8\textwidth,clip,trim=0mm 35mm 0mm 35mm]{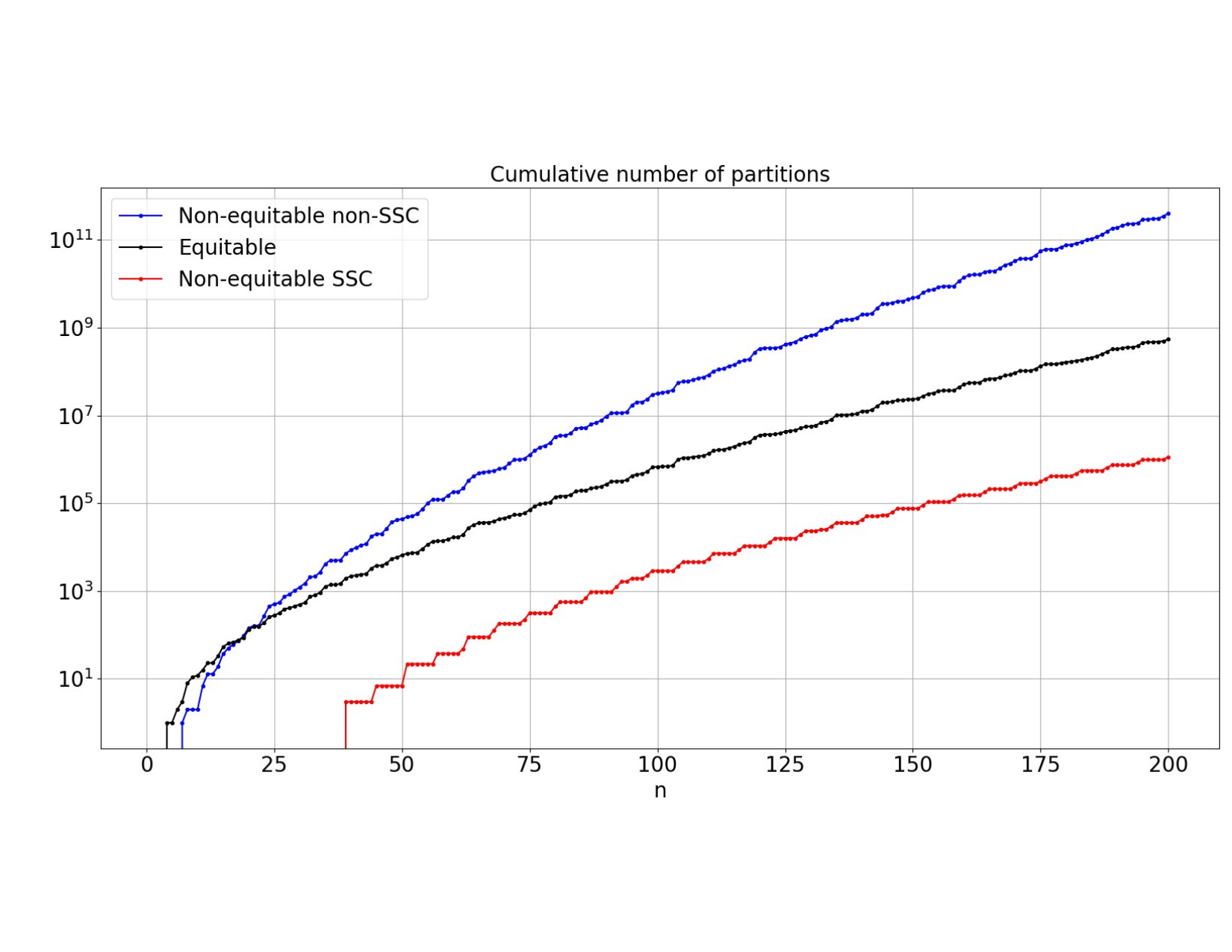}}}
  \caption{Number of ascending partitions by category as a function of $n$: non-cumulative (top), cumulative (bottom).}
  \label{fig:number_of_partitions}
\end{figure}

The 1,143,771 non-equitable SSC ascending partitions that exists for $n \leq 200$ where detected by criteria 1 or 2 or both as described in Table~\ref{tab:criteria}.
\begin{center}
\renewcommand{\arraystretch}{1.2}
\begin{table}[h]
\begin{tabular}{|c|c|}
\hline
Detected by       & Number of partitions \\
\hline
Criterion 1 only  & 1,127,545            \\
\hline
Criterion 2 only  & 12,068               \\
\hline
Criteria 1 and 2  & 4,158                \\
\hline
\end{tabular}
\caption{Non-equitable SSC partitions detected by the two criteria presented in Section~\ref{sec:criteria}}
\label{tab:criteria}
\end{table}   
\end{center}

Another way to consider the same 1,143,771 non-equitable SSC partitions for $n \leq 200$ is to classify them by distinct values of $n,k$.
These 86 $(n,k)$ pairs can be further broken down into families, characterized by the relation between $n$ and $k$. 
In each family of pairs $(n,k)$, the $n$ values, as well as the $k$ values, form an arithmetic progression. 
This suggests that the families are infinite, a fact that will be proved in Section~\ref{sec:special_families} for the two largest families.
Following are the six families noticed among the partitions in this study, and additional 14 remaining pairs listed at the end as ``other'':
\begin{enumerate}
    \setlength{\itemsep}{2pt}
    \item Total of 27 $(n,k)$ pairs for 528,201 non-equitable SSC ascending partitions with $n=3k$:
    
        \noindent{\small\begin{tabular}{ccccccccc}
             (39, 13), &  (45, 15), &  (51, 17), &  (57, 19), &  (63, 21), &  (69, 23), &  (75, 25), &  (81, 27), &  (87, 29),\\ 
             (93, 31), &  (99, 33), & (105, 35), & (111, 37), & (117, 39), & (123, 41), & (129, 43), & (135, 45), & (141, 47),\\
            (147, 49), & (153, 51), & (159, 53), & (165, 55), & (171, 57), & (177, 59), & (183, 61), & (189, 63), & (195, 65).
        \end{tabular}}
    
    \item Total of 24 $(n,k)$ pairs for 559,647 non-equitable SSC ascending partitions with $n=3k-1$:
    
        \noindent{\small\begin{tabular}{cccccccc}
            (62, 21),  & (68, 23),  & (74, 25),  & (80, 27),  & (86, 29),  & (92, 31),  & (98, 33),  & (104, 35),\\
            (110, 37), & (116, 39), & (122, 41), & (128, 43), & (134, 45), & (140, 47), & (146, 49), & (152, 51),\\
            (158, 53), & (164, 55), & (170, 57), & (176, 59), & (182, 61), & (188, 63), & (194, 65), & (200, 67).
        \end{tabular}}
        
    \item Total of 6 $(n,k)$ pairs for 16,983 non-equitable SSC ascending partitions with $n=8k/3$:
    
        \noindent{\small\begin{tabular}{cccccc}
            (80, 30), & (104, 39), & (128, 48), & (152, 57), & (176, 66), & (200, 75).
        \end{tabular}}
        
    \item Total of 5 $(n,k)$ pairs for 4,578 non-equitable SSC ascending partitions with $n=8k/3-1$:
    
        \noindent{\small\begin{tabular}{ccccc}
            (87, 33), & (111, 42), & (135, 51), & (159, 60), & (183, 69).
        \end{tabular}}
        
    \item Total of 6 $(n,k)$ pairs for 173 non-equitable SSC ascending partitions with $n=5k/2$:
    
        \noindent{\small\begin{tabular}{cccccc}
            (95, 38), & (115, 46), & (135, 54), & (155, 62), & (175, 70), & (195, 78).
        \end{tabular}}
        
    \item Total of 4 $(n,k)$ pairs for 61 non-equitable SSC ascending partitions with $n=5k/2-1$:
    
        \noindent{\small\begin{tabular}{cccc}
            (124, 50), & (144, 58), & (164, 66), & (184, 74).
        \end{tabular}}
        
    \item Total of 14 other $(n,k)$ pairs for 34,128 non-equitable SSC ascending partitions:
    
        \noindent{\small\begin{tabular}{ccccccc}
            (95, 30),  & (120, 44), & (125, 45), & (132, 42), & (143, 52), & (144, 45), & (144, 60),\\
            (153, 63), & (160, 56), & (161, 63), & (169, 65), & (175, 55), & (175, 56), & (195, 70).
        \end{tabular}}
\end{enumerate}

\vskip 2pt
The possible $(n,k)$ pairs for non-equitable SSC ascending partitions up to $n=200$ are also visualized in Figure~\ref{fig:nk_scatter} as a scatter plot in the $n-k$ plane, 
where each of the above six families correspond to a straight line. It is interesting to note that the minimal and maximal $n/k$ ratios are:
\begin{itemize}
    \item 3 partitions with $n/k = 144/60 = 2.4$:  $[2^{57}, 6^2, 18], [2^{56}, 4^2, 6, 18], [2^{55}, 4^4, 18]$.
    \item 3762 partitions with $n/k = 144/45 = 3.2$: $[2^{28}, 3^8, 4^3, 5^2, 6, 7, 8, 21], \ldots, [2^{25}, 3^{14}, 8^2, 9^4]$.
\end{itemize}

\begin{figure}[H]
  \includegraphics[width=0.8\textwidth,clip,trim=0mm 35mm 0mm 35mm]{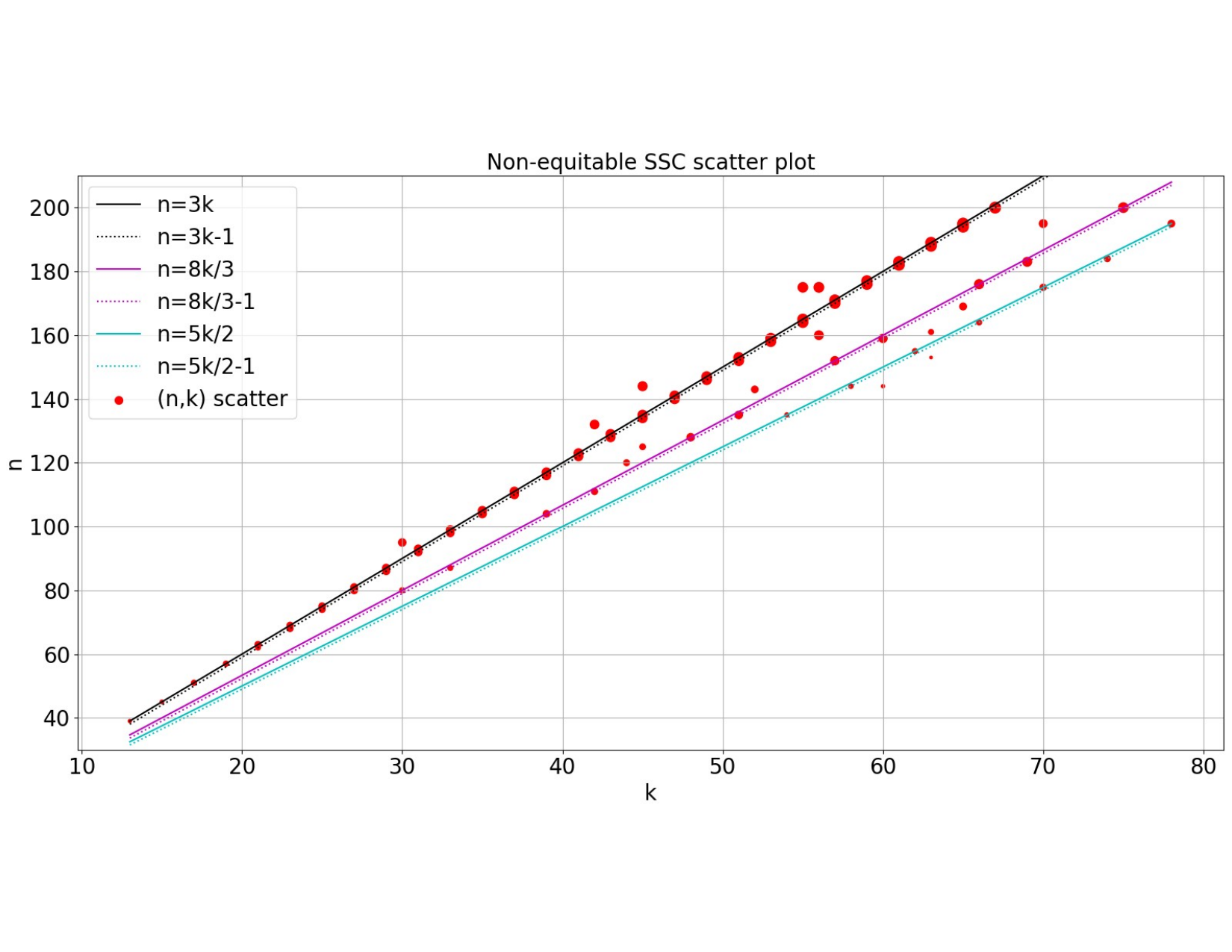}
  \caption{Scatter plot of n vs. k for all non-equitable SSC ascending partitions. The red dot size is proportional to the logarithm of the number of such partitions. The 6 lines depict the families described above.}
  \label{fig:nk_scatter}
\end{figure}

Concluding this section, we present some results on the performance of the four initialization methods described in Section~\ref{sec:constructing}.
We consider the running time of the entire solve algorithm as well as its success rate.
One should realize that the run time of solve is a bottleneck in trying to increase the value of $n$ beyond 200 and possibly uncover interesting non-equitable SSC partitions.
In the experiment, each initialization strategy was evaluated by running solve on each of the 23,838,045 equitable partitions for $n \leq 150$. The subsequent transposition algorithm, as described in Algorithm~\ref{alg:transpositions}, remained fixed throughout the experiment, with the paramaters $\textrm{MaxStepsBetweenPerturbations}=1000, \textrm{MaxPerturbations}=100$.
The results are summarized in Table~\ref{tab:init_performance}.

\vskip 5pt
\noindent
\begin{center}
\renewcommand{\arraystretch}{1.25}
\begin{table}[H]
\begin{tabular}{|l|c|c|c|c|c|c|c|}
\hline
               & Average       & Average     & Average         & \multicolumn{4}{|c|}{Probability of Failure ($\times 10^{-6}$)} \\
                                                                   \cline{5-8}
Initialization & time per      & d() after   & number of       & 1         & 2          & 10          & 100      \\
Method         & solve         & init        & transpositions  & attempt   & attempts   & attempts    & attempts \\
\hline
Random         & 711 $\mu$Sec  & 1,536,145   & 3,506           & 629       & 151        & 0           & 0        \\
\hline
Whole Set      & 539 $\mu$Sec  & 387,631     & 2,606           & 466       & 150        & 8           & 0        \\
\hline
Round Robin    & 243 $\mu$Sec  & 60,919      & 935             & 156       & 16         & 0.08        & 0.04     \\
\hline
Adaptive       &               &             &                 &           &            &             &          \\
Round Robin    & 71 $\mu$Sec   & 686         & 98              & 2.4       & 1.0        & 0.08        & 0.04     \\
\hline
\end{tabular}
\caption{Comparing the performance of balancing partitions for the 4 initialization methods}
\label{tab:init_performance}
\end{table}   

\end{center}

\hfill

In Table~\ref{tab:init_performance}, we ordered the 4 methods by running time from worst to best, but the order is the same for almost all metrics, so that Adaptive Round Robin (ARR) is also the best with respect to the diff metric after initialization and with respect to the total number of transpositions used for finding a balanced  partition.
The running time consists of a fixed initialization cost plus the cost for choosing and performing the transpositions that grows linearly with the number of transpositions used.
The fact that the running time of ARR is far better that all other methods, tells us that it is worthwhile to invest the extra cost of a more elaborate initialization scheme in order to reduce the transposition cost.
Also, when using only a single attempt to solve each partition, the ARR has the lowest failure probability.
This is probably due to the fact that better initialization implies fewer problems with local minima later.
However, as seen in the 100 attempts column, there is a ``flaw" with ARR. 
Namely, there is a very small portion of ``bad" partitions for which, even 100 independent attempts to solve them, do not yield a balanced partition. 
Other methods such as the Random initialization method, are less susceptible to the problem. 
This led to our final logic of starting with several ARR attempts for solving each partition, and for the very small portion of partitions that are left unsolved, try the other methods, until achieving a successful solution.
Table~\ref{tab:bad_partitions} shows the situation for two specific ``bad" partitions, where the probability of success was evaluated using $1000$ independent trials:

\vskip 5pt \noindent

\renewcommand{\arraystretch}{1.25}
\begin{table}[h]
\centering
\begin{tabular}{|l|l|c|c|}
\hline
\multicolumn{2}{|c|}{Partition}                      & $[2^{63}, 3^5, 5^2, 7, 9, {10}^2 13]$ & $[2^{49}, 3^2, 5^4, 13, 15]$ \\
\hline
\multicolumn{2}{|c|}{$n$}                            & 200                                   & 152                          \\
\hline
\multicolumn{2}{|c|}{$k$}                            & 75                                    & 57                           \\
\hline
\multicolumn{2}{|c|}{$\snk$}                         & 268                                   & 204                          \\
\hline
\multirow{4}{*}{$\Pr[Success]$} & Random             & 68.1\%                                & 69.8\%                       \\
\cline{2-4}
                                & WholeSet           & 82.4\%                                & 98.0\%                       \\
\cline{2-4}
                                & RoundRobin         & 14.2\%                                & 0\%                          \\
\cline{2-4}
                                & AdaptiveRoundRobin & 14.9\%                                & 0\%                          \\
\hline
\end{tabular}
\caption{Solution success rate for two ``bad'' partitions using the four initialization methods.}
\label{tab:bad_partitions}
\end{table}   




\section{Infinite families of non-equitable SSC partitions}\label{sec:special_families}

In this section, we prove that the families of non-equitable SSC partitions corresponding to pairs $(n,k)$ where $n=3k$ and $n=3k-1$ (items (1)-(2) above) are infinite. The same method of proof can be applied to show that the families listed in items (3)-(6) are infinite as well.

\begin{thm}\label{thm:counter3k}
    The partial partition $P=[2^e,3^f]$ can be completed to a non-equitable partition $\PP\in AP_{n,k}$ with nonnegative slack in the following cases:
    \begin{enumerate}
        \item [(a)] $n=6t+3$, $k=2t+1$, $e=\frac{3}{2}t$, and $f=2$, for all even $t\ge6$,
        \item [(b)] $n=6t+3$, $k=2t+1$, $e=\frac{3}{2}t-\frac{1}{2}$, and $f=3$, for all odd $t\ge7$, 
        \item [(c)] $n=6t+2$, $k=2t+1$, $e=\frac{3}{2}t$, and $f=3$, for all even $t\ge10$,
        \item [(d)] $n=6t+2$, $k=2t+1$, $e=\frac{3}{2}t+\frac{1}{2}$, and $f=2$, for all odd $t\ge11$.
    \end{enumerate}
\end{thm}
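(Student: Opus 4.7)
The plan is to handle all four cases in parallel. In each case I exhibit a specific completion of $[2^e, 3^f]$ to some $\PP \in \ap$, then (i) invoke Criterion 1 (Theorem~\ref{thm:criterion1}) for non-equitability, observing that the criterion depends only on $(n,k,e,f,p)$ and so is insensitive to the tail of the completion, and (ii) verify the slack condition $\textrm{slack}(\PP) \ge 0$.

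For the completion I use the uniform template ``a block of sixes followed by a single larger part'': namely $\PP = [2^{3t/2}, 3^2, 6^{t/2-2}, 9]$ in case (a), $\PP = [2^{(3t-1)/2}, 3^3, 6^{(t-5)/2}, 10]$ in case (b), $\PP = [2^{3t/2}, 3^3, 6^{(t-6)/2}, 11]$ in case (c), and $\PP = [2^{(3t+1)/2}, 3^2, 6^{(t-5)/2}, 10]$ in case (d). Arithmetic confirms that each is a valid ascending partition in $\ap$, and the parity assumption together with the lower bound on $t$ ensures that the six-block is nonempty. To apply Criterion 1 with $p = 3$, one computes $\snk = 9t+6$ in (a), (b) and $\snk = 9t+3$ in (c), (d), obtaining $h = 1$ in (a), (d) and $h = 2$ in (b), (c); hence $f-h = 1$ throughout, so the criterion collapses to $3c - 6 < \snk$, i.e.\ $2\snk < 3n+6$, which is immediate in each case.

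For the slack verification, my key observation is that within any block of constant part-size $p$ the discrete second difference of $\textrm{slack}_j(\PP)$ in $j$ equals $-p^2$, so $\textrm{slack}_j$ is strictly concave across each block; non-negativity at the block boundaries then suffices. A short computation gives $\textrm{slack}_e > 0$ and $\textrm{slack}_{k-1} = \snk - p_k(p_k+1)/2 > 0$ for every $t$ in the claimed range. The tight boundary is $\textrm{slack}_{e+f}$, which evaluates to $(3t-18)/2$, $3t-19$, $3(t-9)$, and $(3t-29)/2$ in cases (a) through (d) respectively; these polynomials are non-negative precisely for $t \ge 6, 7, 10, 11$, which yields the thresholds in the theorem statement. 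Within the six-block, $\textrm{slack}_{e+f+l}$ equals $\textrm{slack}_{e+f} + 3l(3t - 6l - c_0)$ for some $c_0 \in \{7, 11, 14, 10\}$; the correction remains non-negative throughout the admissible range $0 \le l \le r$, since $3t - 6l - c_0$ is bounded below by a small positive constant.

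The main obstacle I anticipate is the two-sided pressure on the tail: too small a part immediately after the three-block pushes $\textrm{slack}_{e+f+1}$ below zero, while too large a final part breaks $\textrm{slack}_{k-1}$. Using sixes in the middle together with a single residual part of size $9$, $10$, or $11$ balances these constraints without forcing any new bound on $t$, so the thresholds in (a)--(d) arise solely from $\textrm{slack}_{e+f}$. The same template should adapt to prove the analogous infinite families (3)--(6) listed in Section~\ref{sec:implementation}.
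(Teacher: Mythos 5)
Your proposal is correct, and its arithmetic checks out: the explicit completions $[2^{3t/2},3^2,6^{t/2-2},9]$, $[2^{(3t-1)/2},3^3,6^{(t-5)/2},10]$, $[2^{3t/2},3^3,6^{(t-6)/2},11]$, $[2^{(3t+1)/2},3^2,6^{(t-5)/2},10]$ do lie in the right $\ap$, Criterion~1 applies with $f-h=1$ in every case, and your binding constraint $\slack_{e+f}$ evaluates to exactly the same polynomials $\tfrac32 t-9$, $3t-19$, $3(t-9)$, $\tfrac{3t-29}{2}$ that the paper obtains, hence the same thresholds on $t$. The route differs from the paper's in one component: where you hand-build a tail of sixes plus one residual part, the paper proves a general completion lemma (Lemma~\ref{lem:partition_completion}) showing that \emph{any} partial partition with nonnegative partial slacks and enough remaining mass can be completed by nearly equal parts while preserving nonnegative slack; that lemma reduces the whole theorem to checking slack only at $j=e$ and $j=e+f$ plus the two Criterion~1 conditions. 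Your explicit construction avoids proving that lemma but obliges you to verify the extra boundary $\slack_{k-1}=\snk-p_k(p_k+1)/2\ge 0$ and the exact part sums, which you do. Your concavity observation (second difference $-p^2$ within a block, so slack need only be checked at block ends) is precisely the paper's Lemma~\ref{lem:slack_at_end_of_block}, and you are right that it also disposes of the interior of the $6$-block, making your explicit correction formula $3l(3t-6l-c_0)$ redundant. The trade-off is that the paper's completion lemma transfers without change to the families in items (3)--(6) of Section~\ref{sec:implementation}, whereas your template would need a new tail engineered for each family, as your closing remark anticipates.
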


\begin{cor}\label{cor:3k}
    The classes $AP_{3k,k}$ for all odd $k\ge13$, and $AP_{3k-1,k}$ for all odd $k\ge21$ contain non-equitable SSC partitions.
\end{cor}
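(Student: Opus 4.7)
The plan is to derive Corollary~\ref{cor:3k} as an immediate bookkeeping consequence of Theorem~\ref{thm:counter3k}, by parametrizing any odd $k$ as $k = 2t+1$ and then invoking the appropriate case (a)--(d) according to the parity of $t$.

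For the first statement, fix an odd $k \ge 13$ and write $k = 2t+1$, so that $t \ge 6$ and $n = 3k = 6t+3$. If $t$ is even, case (a) of Theorem~\ref{thm:counter3k} applies (since $t \ge 6$) and produces a non-equitable SSC partition in $AP_{6t+3,2t+1} = AP_{3k,k}$ starting with the block $[2^{3t/2}, 3^2, \dots]$. If $t$ is odd, then $t \ge 7$ and case (b) applies, yielding a non-equitable SSC partition starting with $[2^{(3t-1)/2}, 3^3, \dots]$. Every odd $k \ge 13$ falls into exactly one of these subcases, so $AP_{3k,k}$ contains a non-equitable SSC partition in every such case.

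For the second statement, fix an odd $k \ge 21$ and again write $k = 2t+1$, now with $t \ge 10$, so that $n = 3k-1 = 6t+2$. When $t$ is even we have $t \ge 10$ and case (c) gives a non-equitable SSC partition in $AP_{6t+2,2t+1} = AP_{3k-1,k}$; when $t$ is odd we have $t \ge 11$ and case (d) does the same. The threshold $k \ge 21$ is exactly what is needed so that both parity classes of $t$ meet the hypotheses of (c) and (d): the case $t=10$ (i.e.\ $k=21$) is covered by (c), the case $t=11$ (i.e.\ $k=23$) by (d), and every larger odd $k$ by one of the two according to parity.

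The corollary thus reduces entirely to the parity split above, so there is essentially no further obstacle once Theorem~\ref{thm:counter3k} has been established; the only thing to verify carefully is that the parity thresholds in (a)--(d) are compatible with the stated lower bounds $k \ge 13$ and $k \ge 21$, which is precisely the matching $t \ge 6$ (resp.\ $t \ge 10$) for the two parities. The real mathematical content — exhibiting the completions of $[2^e,3^f]$ in each case and verifying both the nonnegative slack condition and the failure of equitability — lies inside Theorem~\ref{thm:counter3k} itself, which I would expect to be proved by the kind of $C$-element counting used in the proofs of Criteria~1 and~2.
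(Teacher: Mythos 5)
Your proposal is correct and follows essentially the same route as the paper: the corollary is obtained by writing $k=2t+1$ and matching the parity of $t$ to cases (a)--(d) of Theorem~\ref{thm:counter3k}, with the thresholds $t\ge 6$ (even), $t\ge 7$ (odd), $t\ge 10$ (even), $t\ge 11$ (odd) translating exactly to $k\ge 13$ and $k\ge 21$. The paper additionally notes in passing that one should verify $e,f>0$, $e+f<k$, and the integrality of $e$ and $f$ in each case, which your parity split handles implicitly.
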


\begin{proof}[Proof of Corollary~\ref{cor:3k}]
    One can verify that in all four cases indeed $e,f>0$, $e+f<k$, and that both $e$ and $f$ are integral.
    The first two cases of Theorem~\ref{thm:counter3k} correspond to partitions in $AP_{3k,k}$ for all odd $k\ge13$ and the last two cases correspond to partitions in $AP_{3k-1,k}$ for all odd $k\ge21$.
\end{proof}

Before proving Theorem~\ref{thm:counter3k}, we need two auxiliary lemmas that establish sufficient conditions for being able to complete a partial partition to an SSC partition. 

\begin{lem}\label{lem:partition_completion}
    Given $n,k$ such that $\snk$ is integral and an incomplete ascending partition $P = [p_1, p_2, \ldots, p_l]$ for some $l<k$ satisfying:
    (i)  $n-\sum_{i=1}^l p_i \geq (k-l)p_l$,
    and
    (ii) $\slack_j(P) \geq 0$ for all $j \leq l$, 
    then $P$ can be completed to an ascending partition $\PP = [p_1, p_2, \ldots, p_k] \in AP_{n,k}$ with $\slack(\PP) \geq 0$.
\end{lem}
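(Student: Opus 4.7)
The plan is to exhibit an explicit completion of $P$---the one that makes the tail as balanced as possible---and verify the slack condition directly for it. Write $N = n - \sum_{i=1}^l p_i$ for the remaining sum and $m = k - l$ for the remaining number of parts; condition (i) gives $N \ge m p_l$. I set $q = \lfloor N/m \rfloor$ and $r = N - qm$, so that $q \ge p_l \ge 1$ and $0 \le r < m$, and propose the balanced completion
\[
p_{l+1} = \cdots = p_{l+m-r} = q, \qquad p_{l+m-r+1} = \cdots = p_k = q+1,
\]
which is ascending, has all parts at least $p_l$, and sums to $N$, so it produces $\mathcal{P}\in\ap$.

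Next, I rewrite the slack condition in a form convenient for this completion. Using $\slack_k(\mathcal{P}) = 0$, the condition $\slack_j(\mathcal{P}) \ge 0$ is equivalent to $M_j(M_j + 1) \le 2(k-j)\snk$, where $M_j := n - S_j$. Setting $\rho = k - j$, a short calculation shows that the balanced completion gives a clean closed form: on \emph{phase 1} (the range $\rho \in [r, m]$, where only $q$-sized parts have been drawn from the tail) one has $M_j = \rho q + r$, and on \emph{phase 2} ($\rho \in [0, r]$) one has $M_j = \rho(q+1)$. The task thus reduces to bounding $\psi(\rho) := M_j(M_j+1)/(2\rho)$ by $\snk$ on each phase.

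On phase 1, $\psi(\rho) = (\rho q + r)(\rho q + r + 1)/(2\rho)$ is convex in $\rho$ (its second derivative is $r(r+1)/\rho^3 \ge 0$), so its maximum on $[r, m]$ is at an endpoint. At $\rho = m$ the value is exactly $M_l(M_l+1)/(2m)$, which is $\le \snk$ by the hypothesis $\slack_l(P) \ge 0$. On phase 2, $\psi(\rho) = (q+1)(\rho(q+1)+1)/2$ is linear and increasing in $\rho$, so its maximum occurs at $\rho = r$, where it agrees with the left endpoint $\psi(r) = (q+1)(r(q+1)+1)/2$ of phase 1. Thus the entire argument boils down to the single inequality
\[
(q+1)\bigl(r(q+1)+1\bigr) \;\le\; 2\snk.
\]

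The main obstacle is establishing this final inequality, which is not $\slack_l(P) \ge 0$ verbatim. My plan is to derive it from that same hypothesis by cross-multiplying: it suffices to show $m(q+1)(r(q+1)+1) \le (mq+r)(mq+r+1)$, and expanding both sides reduces this to the compact form $(m-r)(mq^2 - r - 1) \ge 0$. Both factors are non-negative: $m - r > 0$ since $r < m$, and $mq^2 \ge m \ge r+1$ since $q \ge p_l \ge 1$ (this is where condition (i) enters essentially, by supplying $q \ge p_l \ge 1$). This closes the argument.
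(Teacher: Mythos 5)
Your proof is correct, and it uses the same explicit completion as the paper (split the remaining sum $N=mq+r$ into $m-r$ parts of size $q$ and $r$ parts of size $q+1$), but the verification that this completion satisfies the slack condition is carried out by a genuinely different and more direct route. The paper first proves an auxiliary lemma (that a negative minimum of $\slack_j$ can only occur at the end of a block, via concavity of $j\mapsto\slack_j$ within a block), reduces to the single index $j=k-r$, and then closes with a proof by contradiction built on the arithmetic progression formed by the increments $\Delta_j$, ending in a somewhat opaque algebraic inequality. You achieve the same reduction by rewriting $\slack_j(\PP)\ge 0$ as $M_j(M_j+1)\le 2(k-j)\snk$ and observing that $\psi(\rho)=M_j(M_j+1)/(2\rho)$ is convex on the $q$-block and linear on the $(q{+}1)$-block — which is the same structural fact as the paper's block-end lemma in a different normalization — and then you verify the one remaining endpoint inequality $(q+1)(r(q+1)+1)\le 2\snk$ directly, via the clean factorization $(mq+r)(mq+r+1)-m(q+1)(r(q+1)+1)=(m-r)(mq^2-r-1)\ge 0$, where $q\ge p_l\ge 1$ (i.e., hypothesis (i)) supplies the nonnegativity of the second factor. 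Your closing computation is shorter and more transparent than the paper's contradiction argument, and it makes explicit exactly where each hypothesis is used; the paper's route has the modest advantage that its block-end lemma is stated separately and reused elsewhere (in the proof of Theorem 6.1) to cut down the number of slack checks. One small point worth a sentence in a final write-up: when $r=0$ phase 2 is empty and $\psi$ is linear on the whole range, so the endpoint inequality is not needed there — your argument still goes through, but the degenerate case should be flagged.
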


The proof of Lemma~\ref{lem:partition_completion} relies on the following Lemma.

\begin{lem}\label{lem:slack_at_end_of_block}
For any $\PP \in AP_{n,k}$, if $\slack(\PP) = \min_j \slack_j(\PP) < 0$ then there is an index $j<k$ such that $p_j < p_{j+1}$ and $\slack(\PP)=\slack_j(\PP)$. 
In other words, if negative, $\slack(\PP)$ is attained at the end of a block, but not the last block.
\end{lem}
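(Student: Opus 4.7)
The plan is to exploit a simple concavity property of $\slack_j$ within each block of constant $p$-value, together with the boundary values $\slack_0 = \slack_k = 0$.

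First, I would analyze how $\slack_j$ varies within a single block. Suppose indices $a+1, a+2, \ldots, a+e$ all have common value $p_{a+1} = \cdots = p_{a+e} = q$. Setting $P_j = p_1+\cdots+p_j$, a direct computation gives
\[
\slack_{j+1}(\PP) - \slack_j(\PP) = \sum_{i=P_j+1}^{P_j + q}(n-i+1) - \snk = q(n-P_j) - \tfrac{q(q-1)}{2} - \snk
\]
for each $j$ in the block, so the consecutive second difference on the block is
\[
\bigl[\slack_{j+2} - \slack_{j+1}\bigr] - \bigl[\slack_{j+1} - \slack_j\bigr] = -q^2 < 0.
\]
Equivalently, $i \mapsto \slack_{a+i}$ is a strictly concave quadratic on $\{0,1,\ldots,e\}$ with leading coefficient $-q^2/2$. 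The main obstacle, if any, is this computation; everything else is essentially bookkeeping.

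Once concavity is established, I would invoke the elementary fact that a concave function on a discrete interval attains its minimum at one of the endpoints. Thus for every block $\{a+1,\ldots,a+e\}$ and every $j$ in the block,
\[
\slack_j(\PP) \;\geq\; \min\bigl(\slack_a(\PP), \slack_{a+e}(\PP)\bigr).
\]

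Now suppose $\slack(\PP) < 0$ is attained at some $j^\ast \in \{1,\ldots,k-1\}$. Let the block containing $j^\ast$ have endpoints $a$ and $a+e$, so that $a$ is either $0$ or an index with $p_a < p_{a+1}$, and $a+e$ is either $k$ or an index with $p_{a+e} < p_{a+e+1}$. By the concavity bound, at least one of $\slack_a, \slack_{a+e}$ equals $\slack(\PP)$. Since $\slack_0 = 0$ (empty sum) and $\slack_k = 0$ (noted after Definition~\ref{def:slack}), and $\slack(\PP) < 0$, the attaining endpoint cannot be $0$ or $k$. Therefore it is an index $j \in \{1,\ldots,k-1\}$ with $p_j < p_{j+1}$, which is what was to be shown.
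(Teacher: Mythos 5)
Your proposal is correct and follows essentially the same route as the paper: both establish that $\slack_j$ is strictly concave within each block of equal part sizes (the paper via $\Delta_{j+1}<\Delta_j$ whenever $p_j=p_{j+1}$, you via the explicit second difference $-q^2$) and then use $\slack_0=\slack_k=0$ to force a negative minimum to an end-of-block index. Your treatment of the boundary endpoints is somewhat more explicit than the paper's, but the underlying argument is the same.
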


\begin{proof}
Define $\slack_0(\PP)=0$, let $P_j = \sum_{i=1}^j p_i$ and let
$$\Delta_{j+1} := \slack_{j+1}(\PP) - \slack_j(\PP) = \sum_{i=P_j+1}^{P_{j+1}} (n-i+1) - s^{n,k} \;\;\textrm{ for }0 \leq j \leq k-1.$$
If $p_j = p_{j+1}$, then $\Delta_{j+1} < \Delta_j$, since in both $\Delta_j$ and $\Delta_{j+1}$ we sum up $p_j$ numbers, but in $\Delta_{j+1}$ the numbers are smaller.
Therefore, $\slack_{j+1}(\PP) - \slack_j(\PP) < \slack_j(\PP) - \slack_{j-1}(\PP)$, implying that $\slack_j(\PP) > (\slack_{j-1}(\PP)+\slack_{j+1}(\PP))/2$, i.e. $\slack_j(\PP)$ cannot be a strict minimum.
\end{proof}

Lemma~\ref{lem:slack_at_end_of_block} implies that for any ascending partition $\PP=[p_1^{m_1}, \ldots, p_l^{m_l}] \in AP_{n,k}$, in order to verify that $\slack(\PP)\ge0$
it suffices to check the slack condition only at the end of blocks, namely $\slack(\PP) = \min_{j \in \{m1,m1+m2,\ldots,m1+\ldots+m_{l-1}\}}\slack_j(\PP)$.

\begin{proof}[proof of Lemma~\ref{lem:partition_completion}]
    By Condition (\textit{i}) of the lemma there exist $t$ and $r$ such that $n-\sum_{i=1}^lp_i = t(k-l)+r$, $t\ge p_l$, and $0\le r<k-l$. If $r=0$, we can complete $P$ by adding $k-l$ parts, each of size $t$, and we are done, by Lemma~\ref{lem:slack_at_end_of_block}.
    So we assume $r>0$. We define the completion of the partition $P$ as:
    \begin{eqnarray*}
    p_j = \left\{ \begin{tabular}{cc}
         $t$   & for $j=l+1,\ldots,k-r$\\
         $t+1$ & for $j=k-r+1,\ldots,k$
    \end{tabular}\right.
    \end{eqnarray*}
    We claim that $\PP=[p_1,\ldots,p_k]\in AP_{n,k}$ and $\slack(\PP) \geq 0$.

    The first statement holds, since $t\ge p_l$. 
    We show that the second statement holds by contradiction. Indeed, assume that $\slack(\PP) < 0$. 
    By (ii) and Lemma~\ref{lem:slack_at_end_of_block} the slack condition must be violated at the end of the $t$-block, namely, $\slack_{k-r}(P) < 0$.
    As in the proof of Lemma~\ref{lem:slack_at_end_of_block}, we denote $\Delta_j=\slack_j(\PP)-\slack_{j-1}(\PP)$ for $j=1,\ldots,k$. 
    Since $\slack_{k-r}(P) < 0$ and $\slack_k(P) = 0$ we have:
    \begin{eqnarray}
        0 < \slack_k(P)-\slack_{k-r}(P) = \sum_{i=k-r+1}^k\Delta_i. \label{eq:arith_sum}
    \end{eqnarray}

    By Lemma~\ref{lem:slack_at_end_of_block}, $\slack_{k-r}(P) < \slack_{k-r-1}(P)$ and thus, $\Delta_{k-r}<0$.
    Moreover, $\Delta_k, \Delta_{k-1}, \ldots, \Delta_{k-r+1}$ is an ascending arithmetic progression, where $\Delta_k+\Delta_{k-r+1} = 2M$ with $M$ being its mean.
    By \eqref{eq:arith_sum} we have $M>0$ and it follows that:
    \begin{equation}\label{ineq:Delta}
        \Delta_{k-r+1}=2M-\Delta_k>-\Delta_k
    \end{equation}

    Note that for all $j$, $\Delta_j$ is the sum of $p_j$ consecutive numbers minus $\snk$. 
    So, there is a number $x$ such that $\Delta_{k-r+1}=\sum_{i=x}^{x+t}i-\snk$ and $\Delta_{k-r}=\sum_{i=x+t+1}^{x+2t}i-\snk$. Since $\Delta_{k-r}<0$ we have $\snk>\sum_{i=x+t+1}^{x+2t}i$. Also, note that $\Delta_k=\sum_{i=1}^{t+1}i-\snk$. Applying these identities and inequalities to \eqref{ineq:Delta} yields:
    \begin{equation*}
        \sum_{i=x}^{x+t}i-\snk > \snk-\sum_{i=1}^{t+1}i,
    \end{equation*}
    or
    \begin{equation*}
        \sum_{i=x}^{x+t}i>2\snk-\sum_{i=1}^{t+1}i>2\sum_{i=x+t+1}^{x+2t}i-\sum_{i=1}^{t+1}i.        
    \end{equation*}
    Applying the sum formula we obtain:
    \begin{equation*}
    \begin{split}
        (t+1)(x+\frac{t}{2})&>2t(x+\frac{3t+1}{2})-\frac{(t+1)(t+2)}{2}\\
    \Rightarrow\quad\quad    x+\frac{t}{2}&>t(2x+3t+1)-\frac{t^2}{2}-\frac{3}{2}t-1-t(x+\frac{t}{2})\\ 
    \Rightarrow\quad x(1-t)&>2t^2-t-1 = (t-1)(1+2t)
    \end{split}
    \end{equation*}

    This is a contradiction since the lhs is at most zero while the rhs is at least zero.
\end{proof}

\begin{proof}[Proof of Theorem~\ref{thm:counter3k}]
    For each of the four cases, we show that there exists a non-equitable SSC partition $\PP=[2^e,3^f,\ldots]$. 
    For this, it is sufficient to verify that the partial partition $P=[2^e,3^f]$
    satisfies the conditions of Lemma~\ref{lem:partition_completion} as well as the conditions of Criterion 1 (Theorem~\ref{thm:criterion1} with $p=3$).  
    Of the two conditions of Lemma~\ref{lem:partition_completion}, 
    Condition (i) $n-2e-3f\ge 3(k-e-f)$, trivially holds in all four cases, and by Lemma~\ref{lem:slack_at_end_of_block}, it is sufficient to check condition (ii), $slack_j(\PP) \geq 0$ for $j \leq e+f$,  only at $j=e$ and $j=e+f$.
    Therefore, we are left with four conditions that must be met, where the first two are slack conditions, and the last two are the conditions of Criterion 1 (Theorem~\ref{thm:criterion1}).
    \begin{eqnarray}
            \slack_e(\PP) &\geq& 0,        \label{eq:counter3k_cond1} \\
            \slack_{e+f}(\PP) &\geq& 0,    \label{eq:counter3k_cond2} \\            
            f &>& h,                       \label{eq:counter3k_cond3} \\
            (c-1)+(c-2)+(c-3) &<&\snk,     \label{eq:counter3k_cond4} 
    \end{eqnarray}
    where, recalling from Notation~\ref{notn:h}, $c=\snk-n$ is the smallest element in the set $C$ of numbers in $[n]$ that may be in a 2-tuple in a balanced partition, and $h=|C|-2e$ is the number of elements of $C$ that are left out after populating the 2-tuples.
    Table~\ref{tab:cases} summarizes the data in the theorem, along with the calculated values of $\snk$, $h$, $c$, $3c-6$, $e+f$ and $2e+3f$.

    \renewcommand{\arraystretch}{1.5}
    \begin{table}[h]
        \centering
        \begin{tabular}{|c|c|c|c|c|c|c|c|c|c|c|c|}
         \hline
         case   & $n$    & $k$    & $\snk$ & $c$    & $3c-6$ & $e$                        & $f$ & $h$ &$e+f$                       & $2e+3f$ & range of $t$\\
         \hline
          (a)   & \multirow{2}{*}{$6t+3$} & \multirow{2}{*}{$2t+1$} & \multirow{2}{*}{$9t+6$} & \multirow{2}{*}{$3t+3$} & \multirow{2}{*}{$9t+3$} & 
          $\frac{3}{2}t$             & 2   & 1   &$\frac{3}{2}t+2$            & $3t+6$ & $t \geq 6$ even\\
          \cline{1-1}\cline{7-12}
          (b)   &  & & & & & $\frac{3}{2}t-\frac{1}{2}$ & 3  & 2    & $\frac{3}{2}t+\frac{5}{2}$ & $3t+8$ & $t \geq 7$ odd\\
          \hline
          (c)   & \multirow{2}{*}{$6t+2$} & \multirow{2}{*}{$2t+1$} & \multirow{2}{*}{$9t+3$} & \multirow{2}{*}{$3t+1$} & \multirow{2}{*}{$9t-3$} & 
          $\frac{3}{2}t$             & 3   & 2   & $\frac{3}{2}t + 3$         & $3t+9$ & $t \geq 10$ even\\
          \cline{1-1}\cline{7-12}
          (d)   &  & & & & & $\frac{3}{2}t+\frac{1}{2}$ & 2   & 1   & $\frac{3}{2}t+\frac{5}{2}$ & $3t+7$ & $t \geq 11$ odd\\
          \hline
        \end{tabular}
        \caption{The four cases of Theorem~\ref{thm:counter3k}}
        \label{tab:cases}
    \end{table}
    \renewcommand{\arraystretch}{1}
    
    Leaving \eqref{eq:counter3k_cond2} for later, let us first consider the three other inequalities in order:
    \begin{itemize}
    \item 
        Condition \eqref{eq:counter3k_cond1} can be rewritten as:
        $$\slack_e(\PP) = 2e(n-\frac{2e-1}{2}) - e \snk = e(2n-2e+1-\snk) = e h \geq 0,$$
        by \eqref{eq:h}, which is true in all four cases, by Table~\ref{tab:cases}.
    \item 
        Conditions \eqref{eq:counter3k_cond3} and \eqref{eq:counter3k_cond4} hold in all four cases, as indicated by the data in Table~\ref{tab:cases}.
    \end{itemize}
    \noindent
    
    Therefore, the problem reduces to showing that Condition \eqref{eq:counter3k_cond2} holds, 
    namely:
    \begin{equation*}
        \slack_{e+f}(\PP) = \sum_{i=n-(2e+3f)+1}^n i -(e+f)\snk\ge 0
    \end{equation*}
    or
    \begin{equation}\label{eq:slack:e+f:1}
        (2e+3f)(n+\frac{1-(2e+3f)}{2})-(e+f)\snk\ge 0
    \end{equation}

    Substituting the expressions for $n,\snk,e+f$ and $2e+3f$ from Table~\ref{tab:cases} in \eqref{eq:slack:e+f:1} in each of the four cases and solving for $t$ yields:
    
    \noindent {\bf Case (a):}    
    
    \begin{equation*}
    \begin{split}
        (3t+6)(6t+3+\frac{1-3t-6}{2})-(\frac{3}{2}t+2)(9t+6)&\ge 0\\
        (3t+6)(\frac{9}{2}t+\frac{1}{2})-(\frac{3}{2}t+2)(9t+6)&\ge 0\\
        \frac{3}{2}t-9&\ge 0,
    \end{split}
    \end{equation*}
    
    which holds whenever $t\ge 6$.
    
    \bigskip
    
    \noindent {\bf Case (b):} 
    
    \begin{equation*}
        (3t+8)(6t+3+\frac{1-3t-8}{2})-(\frac{3}{2}t+\frac{5}{2})(9t+6)\ge 0,   
    \end{equation*}
    which holds whenever $t\ge 6.33$. Since $t$ is odd we assume $t\ge7$. 

    \hfill

    \noindent {\bf Case (c):}
    Solving for $t$ we obtain $t\ge9$, and we can assume that $t\ge 10$, since $t$ is even.

    \noindent {\bf Case (d):}
    Solving for $t$ we obtain $t\ge9.67$, and we can assume that $t\ge 11$, since $t$ is odd. 
    
    This completes the proof.
\end{proof}

\begin{rem}
    The values of $e$ and $f$ in Theorem~\ref{thm:counter3k} were chosen so that $e$ is maximal for $h>0$, and $f$ is minimal so that $f>h$. This was sufficient for showing that each one of the families $AP_{n,k}$ mentioned in Theorem~\ref{thm:counter3k} contains non-equitable SSC partitions. As the results of the computation in Section~\ref{sec:implementation} indicate, if we allow smaller values of $e$ (which results in larger $h$ and thus, larger $f$) and $f$ larger than $h+1$, while keeping the other constraints intact, we may obtain more such partitions.
\end{rem}

\section{Conclusion}\label{sec:conclusion}
The computerized search work described in Section~\ref{sec:constructing} was initiated aiming to verify the conjecture that every SSC ascending partition is equitable and possibly suggest a method for proving it. However, the search uncovered many counter-examples, suggesting new possible lines of research.

One line of research is trying to enumerate families of non-equitable SSC partitions. We have seen that the classes $AP_{3k,k}$ and $AP_{3k-1,k}$  contain infinitely many non-equitable SSC members. This is also true for the classes $AP_{\frac{8}{3}k,k}$, $AP_{\frac{8}{3}k-1,k}$, $AP_{\frac{5}{2}k,k}$, and $AP_{\frac{5}{2}k-1,k}$. So we could ask whether there is a rule about the relation between $n$ and $k$ that allows the class $AP_{n,k}$ (when non-empty) to contain non-equitable SSC partitions. We can restrict to the case of a linear relation and ask:

\begin{ques}\label{ques:alpha}
    For which values of $\alpha$ and $\beta$ does $AP_{\alpha k+\beta,k}$ contain non-equitable SSC partitions for infinitely many values of $k$?
\end{ques}

We have seen that it is easy to create constraints on equitability when the partition contains pairs, but can we do the same when the smallest part is larger than 2? in other words,

\begin{ques}
    Are there non-equitable SSC ascending partitions with $p_1>2$?
\end{ques}

A positive answer to this question requires the uncovering of new families of non-equitable SSC partitions, possibly by expanding the search to much larger values of $n$.

Now, we go back to the original question posed in Problem~\ref{prob1}. 
The fact that the necessary condition in \eqref{eq:nec_cond} is not always sufficient does not mean that there are no other sufficient conditions for equitability. 
However, the different families of non-equitable SSC ascending partitions found so far and the two criteria for non-equitability introduced in Section~\ref{sec:criteria} suggest that there might be many more families (as implied by Question~\ref{ques:alpha}) and criteria that may be hard to enumerate. 
This gives rise to the following questions:

\begin{ques}
    Is the decision problem of whether a given partition is equitable NP-complete?
\end{ques}
or alternatively,
\begin{ques}
    Is there an efficient (polynomial) procedure for deciding whether a given partition is equitable?
\end{ques}

The question is basically about the complexity and richness of the space of non-equitable SSC partitions. The problem has some resemblance to other NP-complete problems involving sums of subsets, such as  \href{https://en.wikipedia.org/wiki/Subset_sum_problem}{the subset sum problem}, \href{https://en.wikipedia.org/wiki/Partition_problem}{the partition problem} and \href{https://en.wikipedia.org/wiki/3-partition_problem}{the 3-partition problem}. 

Another possible direction of research is to identify special cases in which the conjecture holds. 
One approach may be to fix $k$. We know that the conjecture is true for $k\le 4$ \cite{miller2003, beena2009, kotlar16}, but does it hold for any fixed $k$ (and sufficiently large $n$)?
Another approach is to limit the number of distinct values in $\PP$. For example, Cichacz and G{\H{o}}rlich \cite{cichacz2018constant} solved the case where there are two distinct values, each appearing the same number of times.
We suggest a new case, which we believe is equitable as well: 

\begin{ques}[The linear case]
    Given the rational numbers $0 < a_1 \leq \cdots \leq a_k$ such that the following conditions hold:
    \begin{enumerate}
    \item $\sum_{i=1}^ka_i=1$,
    \item $\PP=\{a_1n,a_2n,\ldots,a_kn\}$ is in $AP_{n,k}$,
    \item
    \begin{eqnarray*}
        \left( \sum_{i=1}^j a_i \right) \left(1-\frac 1 2 \sum_{i=1}^j a_i\right) > \frac j {2k}   && \textrm{for all }j=1,2,\ldots,k-1,
    \end{eqnarray*}
    \end{enumerate}
    prove that $\PP$ is equitable for a sufficiently large $n$, and moreover, that $\PP$ can always be solved by the Adaptive Round Robin algorithm~\ref{alg:adaptiveRoundRobin} followed by a constant number of improving transpositions.
\end{ques}

Note that condition (3) is obtained by strengthening the slack condition:
\begin{eqnarray*}
    \slack_j(\PP) &=& \sum_{i=1}^{(a_1+\cdots+a_j)n} (n-i+1) - j \frac{n(n+1)}{2k} \\
                  &=& \left[ \left( \sum_{i=1}^j a_i \right) \left(1-\frac 1 2 \sum_{i=1}^j a_i\right) - \frac j {2k} \right] n^2 \pm o(n^2) \geq 0.
\end{eqnarray*}


\bibliographystyle{abbrv}

\end{document}